\renewcommand{\d}{\mathrm{d}}
\newcommand{\mcpkn}{{\rm MCP}(K, N)}
\newcommand{\mm}{\mathfrak m}
\newtheorem{proposition}{Proposition}[section]
\newtheorem{theorem}[proposition]{Theorem}
\newtheorem{definition}[proposition]{Definition}
\theoremstyle{definition}
\theoremstyle{remark}
\newtheorem{remark}[proposition]{Remark}
\numberwithin{equation}{section}
\newcommand{\N}{\mathbb{N}}
\newcommand{\R}{\mathbb{R}}
\title{{\bf Sharp uncertainty principles on metric measure spaces}
}
\begin{document}


\author{Bang-Xian Han\thanks{School of   Mathematical Sciences, University of Science and Technology of China, 230026, Hefei, China, hanbangxian@ustc.edu.cn   }
\and Zhefeng Xu
\thanks{School of Mathematical Sciences, University of Science and Technology of China,  230026, Hefei, China, xzf1998@mail.ustc.edu.cn}}

\date{\today}

\maketitle
\begin{abstract}
  We study the Heisenberg--Pauli--Weyl uncertainty principle and the Caffarelli--Kohn--Nirenberg interpolation inequalities,   on  metric measure spaces satisfying measure contraction property.  Using  localization techniques,  we show that  these inequalities are valid only on volume cones. 
\end{abstract}

\textbf{Keywords}: uncertainty principle,  metric measure space, volume cone, Bishop-Gromov inequality, Caffarelli--Kohn--Nirenberg inequality
\maketitle

\tableofcontents


\section{Introduction}
    The classical Heisenberg uncertainty principle was first established by Heisenberg \cite{Heisenberg}, who brings a fundamental problem in quantum mechanics to the points: \emph{the position and the momentum of particles cannot be both determined explicitly but only in a probabilistic sense with a certain uncertainty}.  A few years later, Pauli and Weyl \cite{Weyl} described it by rigorous mathematical formulation, which states that a function itself and its Fourier transform cannot be well localized simultaneously. The Heisenberg--Pauli--Weyl uncertainty principle on the Euclidean space is described by the following inequality
\begin{equation}\label{equation1-1}
	\left(\int_{\R^n} |\nabla u(x)\,|^2\,\mathrm{d} x \right) \left(\int_{\R^n} |x|^2 u^2(x)\,\mathrm{d} x \right) \geq \frac{n^2}{4}\left(\int_{\R^n} u^2(x)\,\mathrm{d} x\right)^2,\quad \forall u\in C^\infty_0(\R^n)
\end{equation}    
    where $\frac{n^2}{4}$ is sharp and the  extremals are $u_\lambda(x)=e^{-\lambda|x|^2}, \lambda>0$.

    The Heisenberg--Pauli--Weyl uncertainty principle only had sporadic developments in the fifty years after the initial work in the 1930's, followed by a steady stream of results in the last forty years. It has been studied in various contexts, in the 1980's and 1990's, Fefferman \cite{MR0707957} and Nahmod \cite{MR1255277} solved eigenvalue estimates for differential and pseudodifferential operators by SAK principle.  We  refer to a survey written by Folland and Sitaram \cite{MR1448337},  where they gave an overview of the history and the relevance of (\ref{equation1-1}) in the last century.   At the beginning of this century, Ciatti, Ricci and Sundari \cite{MR2355602} extended this principle to positive self-adjoint operators on measure spaces, and in the following years, Erb \cite{Erb,MR2652457}  and Kombe-\"Ozaydin  \cite{MR2538592,MR3074365} proved a sharp uncertainty principle on Riemannian manifolds by operator theoretic approach, Huang,  Krist\'aly and Zhao \cite{HKZ-uncertain} got a sharp uncertainty principle  on Finsler manifolds. In the context of metric measure spaces, Okoudjou, Saloff-Coste and Teplyaev  \cite{MR2386249} proved a weak uncertainty principle,     Mart\'{i}n--Milman \cite{MR3475459} obtained an $L^1$-uncertainty principle with isoperimetric weights.

\medskip

Inspired by a recent work of Krist\'{a}ly \cite{MR3862150},    where he revealed the rigidity of the Heisenberg--Pauli--Weyl uncertainty principle on Riemannian manifolds with non-negative Ricci curvature, we realize that  similar rigidity holds  on a large family of metric measure spaces, called essentially non-branching  ${\rm MCP}(0,N)$ spaces. Important examples include Riemannian manifolds with non-negative Ricci curvature and their Gromov--Hausdorff  limits, RCD$(0, N)$ spaces and many ideal sub-Riemannian manifolds including generalized H-type groups, the Grushin plane and Sasakian structures.

    Let $(X,d)$ be a complete and separable metric space and $\mm$ be a Radon measure  on $X$ with full support. Fix $x_0\in X$ and consider the Heisenberg--Pauli--Weyl uncertainty principle (HPW for short) on $(X,d,\mm)$ in  the following form: for any $u\in \text{Lip}_c(X, d)$,
\begin{equation}\label{first}\tag*{$(\mathrm{HPW})_{x_0}$}
	\left(\int_X |\,\text{lip}\,u\,|^2\,\mathrm{d} \mm\right) \left(\int_X d^2_{x_0} u^2\,\mathrm{d} \mm\right) \geq \frac{N^2}{4}\left(\int_X u^2\,\mathrm{d} \mm\right)^2
\end{equation}
	where $d_{x_0}(x):=d(x_0,x)$ is the distance function from $x_0$ and Lip$_c(X, d)$ is the space of Lipschitz functions with compact support,  and
\begin{equation*}
	\text{lip}\,u(x):=\limsup_{y\rightarrow x} \frac{|u(y)-u(x)|}{d(x,y)}
\end{equation*}
    is the local Lipschitz constant of $u$ at $ x\in X$.
    
    In our first theorem ,we generalize Krist\'{a}ly's result  \cite{MR3862150} to metric measure spaces.
    
\begin{theorem}\label{theorem1}
	Let $(X,d,\mm)$ be an essentially non-branching metric measure space satisfying ${\rm MCP}(0,N)$ for some $N\in(1,\infty)$. Then the following statements are equivalent:
	\begin{itemize}
	\item [(a)]  {\ref{first}} holds for some $x_0\in X$ and $\frac{N^2}{4}$ is sharp.
	\item [(b)] $ (X,d,\mm)$ is a a volume cone.
	\end{itemize}
\end{theorem}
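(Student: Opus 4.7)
For (b) $\Rightarrow$ (a), the plan is to exploit the polar structure of a volume cone at $x_0$: the measure $\mm$ disintegrates along geodesic rays from $x_0$ with radial density proportional to $r^{N-1}$. A radial test function $u = \phi(d_{x_0})$ reduces {\ref{first}} to the classical one-dimensional Heisenberg--Pauli--Weyl inequality with weight $r^{N-1}\d r$, whose sharp constant $N^2/4$ follows from the integration-by-parts identity $\int_0^\infty \phi^2 r^{N-1}\d r = -\tfrac{2}{N}\int_0^\infty \phi\phi' r^N\d r$ combined with Cauchy--Schwarz. Non-radial $u$ are handled via $|\text{lip}\,u|^2 \geq (\partial_r u)^2$ together with a Cauchy--Schwarz on the transverse coordinate. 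Sharpness is then verified by testing with the Gaussian $u_\lambda(x) = e^{-\lambda d_{x_0}^2(x)}$, for which each Cauchy--Schwarz step becomes an equality.

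For the rigidity (a) $\Rightarrow$ (b), my strategy is to plug Gaussians into the inequality and extract information about $v(r) := \mm(B_r(x_0))$. Concretely, I would test {\ref{first}} with compactly-supported smooth truncations of $u_\lambda = e^{-\lambda d_{x_0}^2}$ and pass to the limit, using the polynomial volume growth provided by $\mcp$-Bishop--Gromov to justify the approximation. Since $d_{x_0}$ is $1$-Lipschitz, the pointwise bound $\text{lip}\,u_\lambda \leq 2\lambda d_{x_0}\, e^{-\lambda d_{x_0}^2}$ already suffices, so no lower estimate on $\text{lip}\,d_{x_0}$ is needed. Writing $J(t) := \int_X e^{-t d_{x_0}^2}\d\mm$ and setting $t = 2\lambda$, the inequality evaluated on $u_\lambda$ simplifies to
\[
-t\,\frac{J'(t)}{J(t)} \;\geq\; \frac{N}{2}, \qquad t > 0,
\]
equivalently, $t \mapsto t^{N/2} J(t)$ is non-increasing on $(0,\infty)$.

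The crux is to translate this monotonicity into a statement about $v$. Integrating by parts in $J(t)$ and substituting $s = tr^2$ yields
\[
t^{N/2} J(t) \;=\; \int_0^\infty e^{-s}\, s^{N/2}\, \frac{v(\sqrt{s/t})}{(\sqrt{s/t})^N}\,\d s.
\]
By $\mcp$-Bishop--Gromov, $r \mapsto v(r)/r^N$ is non-increasing, so the limits $\ell(x_0) := \lim_{r\to 0^+} v(r)/r^N$ and $\mathrm{AVR}(x_0) := \lim_{r\to\infty} v(r)/r^N$ exist in $[0,+\infty]$ and satisfy $\ell(x_0) \geq \mathrm{AVR}(x_0)$. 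Dominated convergence gives
\[
\lim_{t\to\infty} t^{N/2} J(t) = \Gamma\!\left(\tfrac{N}{2}+1\right)\ell(x_0), \qquad \lim_{t\to 0^+} t^{N/2} J(t) = \Gamma\!\left(\tfrac{N}{2}+1\right)\mathrm{AVR}(x_0).
\]
The non-increasing property then forces $\mathrm{AVR}(x_0) \geq \ell(x_0)$. Hence $v(r)/r^N$ is constant on $(0,\infty)$, i.e., $(X,d,\mm)$ is a volume cone with tip $x_0$.

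The main technical obstacle is the dominated convergence step when $\ell(x_0) = +\infty$: the integrand lacks an integrable majorant at large $t$, and one must first extract $\ell(x_0) < \infty$ a posteriori from the monotonicity of $t^{N/2}J(t)$ together with its finite $t\to 0^+$ limit $\mathrm{AVR}(x_0)\Gamma(N/2+1)$. A secondary issue is stability of the HPW ratio under the truncation $u_\lambda \chi_R \to u_\lambda$ as $R \to \infty$, which again relies on the polynomial volume growth from Bishop--Gromov on essentially non-branching $\mcp$ spaces.
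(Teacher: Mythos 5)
Your proposal follows essentially the same route as the paper: radial disintegration plus a one-dimensional integration-by-parts argument for (b) $\Rightarrow$ (a), and Gaussian test functions combined with Bishop--Gromov monotonicity for (a) $\Rightarrow$ (b). A few remarks on the differences. For (b) $\Rightarrow$ (a), your ``polar structure'' with radial density $\propto r^{N-1}$ is not automatic from the volume-cone hypothesis alone: the paper first invokes the needle decomposition (localization) for MCP spaces, and only then uses the cone condition together with the one-dimensional MCP density bound \eqref{1dmcp} to pin each needle down to $c_\alpha x^{N-1}\,\d x$; moreover, integrating the needle-wise inequality back to $X$ is nontrivial and is handled in the paper by a renormalization of the disintegration ($\widetilde{\mm}_\alpha$, $\widetilde{\qq}$) before Cauchy--Schwarz --- your phrase ``Cauchy--Schwarz on the transverse coordinate'' is the right idea but glosses over this. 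For (a) $\Rightarrow$ (b) the two arguments are the same modulo presentation: the paper normalizes by a model function $T(\lambda)\propto\lambda^{-N/2}$ and studies $P/T$, while you study $t^{N/2}J(t)$ directly; these are identical. Your extraction of the conclusion is slightly cleaner --- you compare the $t\to 0^+$ and $t\to\infty$ limits of $t^{N/2}J(t)$ to show $\mathrm{AVR}(x_0)=\ell(x_0)$, whereas the paper unrolls the integral inequality $\int_0^\infty[v(\rho)-A\omega_N\rho^N]\rho e^{-2\lambda\rho^2}\,\d\rho\ge0$ against the pointwise Bishop--Gromov upper bound $v(\rho)\le A\omega_N\rho^N$. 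You also correctly flag that $\ell(x_0)<\infty$ must be established (via Fatou and the monotonicity of $t^{N/2}J(t)$), a point the paper passes over with a bare ``without loss of generality''; your fix is the right one, since on a general $\mcp$ space $\lim_{r\to 0^+}v(r)/r^N$ may indeed be infinite.
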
 

    Next we prove the rigidity of the  Caffarelli--Kohn--Nirenberg interpolation inequality (CKN for short) in the setting of non-smooth metric measure spaces. The classical CKN in the Euclidean setting was first proposed in \cite{MR0768824}, then Lin \cite{MR0864416} generalized it to include derivatives of any order. It is known that the CKN contains the classical Sobolev inequality and Hardy inequality as special cases, which have played important roles in many applications. 
    
   	Let $N,p,q\in \R$ be such that 
\begin{equation}\label{equation1-2}
   		0<q<2<p \,\,\text{and}\,\, 2<N<\frac{2(p-q)}{p-2}.
\end{equation}
    Fix $x_0\in X$, for all $u\in \text{Lip}_c(X,d)$,
\begin{equation*}\label{second}\tag*{$(\mathrm{CKN})_{x_0}$}
	\left(\int_X |\,\text{lip}\,u\,|^2\,\mathrm{d} \mm\right)\left(\int_X \frac{|u|^{2p-2}}{d^{2q-2}_{x_0}}\,\mathrm{d} \mm\right) \geq \frac{(N-q)^2}{p^2}\left(\int_X \frac{|u|^p}{d^q_{x_0}}\,\mathrm{d} \mm\right)^2.
\end{equation*}
    An endpoint of \ref{second} is exactly the \ref{first} whenever $p\rightarrow 2$ and $q\rightarrow 0$. In the  Euclidean setting, Xia \cite{MR2350131} proved the sharpness of $\frac{(N-q)^2}{p^2}$ and the existence of a class of extremals
\begin{equation}\label{equation1-3}
	u_\lambda(x)=(\lambda+|x-x_0|^{2-q})^{\frac{1}{2-p}},\, \lambda>0.
\end{equation}

Similar to Theorem \ref{theorem1},   $(\mathrm{CKN})_{x_0}$ is also rigid.

\begin{theorem}\label{theorem2}
	Let $N,p,q$ be real numbers satisfying \eqref{equation1-2}  and $(X,d,\mm)$ be an essentially non-branching metric measure space satisfying ${\rm MCP}(0,N)$.  Then the following statements are equivalent:
\begin{itemize}
	\item [(a)]   \ref{second} holds for some $x_0\in X$ and $\frac{(N-q)^2}{p^2}$ is sharp.
	\item [(b)] $(X,d,\mm)$ is a volume cone.
	\end{itemize} 
\end{theorem}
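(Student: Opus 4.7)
\noindent\emph{Proof plan.} Both implications will be obtained via the $L^1$-needle decomposition of $\mm$ associated to the $1$-Lipschitz function $d_{x_0}$. Since $(X,d,\mm)$ is an essentially non-branching \mcp\ space, a theorem of Cavalletti--Mondino type gives a disintegration $\mm = \int \mm_\gamma\,\d q(\gamma)$, where each $\mm_\gamma = h_\gamma \mathcal{H}^1$ is concentrated on a unit-speed transport ray $\gamma:[0,L_\gamma)\to X$ starting at $x_0$, and each density $h_\gamma$ satisfies the one-dimensional \mcp\ condition: $t\mapsto h_\gamma(t)/t^{N-1}$ is non-increasing on $(0,L_\gamma)$. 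The volume cone condition over $x_0$ is equivalent to $h_\gamma(t)=c_\gamma t^{N-1}$ for $q$-a.e.\ $\gamma$.

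\emph{(b)$\Rightarrow$(a).} When $h_\gamma(t)=c_\gamma t^{N-1}$, the key building block is the one-dimensional weighted inequality
\begin{equation*}
\Bigl(\int_0^\infty |f'|^2 t^{N-1}\,\d t\Bigr)\Bigl(\int_0^\infty |f|^{2p-2} t^{N-2q+1}\,\d t\Bigr) \geq \frac{(N-q)^2}{p^2}\Bigl(\int_0^\infty |f|^p t^{N-q-1}\,\d t\Bigr)^2,
\end{equation*}
which is Xia's inequality \cite{MR2350131}, proved by writing $t^{N-q-1}|f|^p=\frac{1}{N-q}(t^{N-q})'|f|^p$, integrating by parts, and applying Cauchy--Schwarz. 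Using $\text{lip}\,u\geq|\partial_t(u\circ\gamma)|$ on each needle, the disintegration recasts the three integrals in \ref{second} as $q$-averages of one-dimensional integrals; a further Cauchy--Schwarz step in the $q$-variable of the form $(\int P\,\d q)(\int Q\,\d q)\geq (\int\sqrt{PQ}\,\d q)^2$ reassembles the product structure of \ref{second} from the pointwise $1$D bounds. Evaluating on $u_\lambda(x)=(\lambda+d_{x_0}(x)^{2-q})^{1/(2-p)}$ and letting $\lambda\to 0^+$ gives sharpness of $(N-q)^2/p^2$.

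\emph{(a)$\Rightarrow$(b).} Assume \ref{second} with sharp constant. Feed into it the radial cut-off trial functions
\[u_{\lambda,R}(x)=\chi_R(d_{x_0}(x))\bigl(\lambda+d_{x_0}(x)^{2-q}\bigr)^{1/(2-p)},\quad \lambda>0,\]
with a smooth cut-off $\chi_R$ at scale $R$. Via the coarea identity $\int\Phi(d_{x_0})\,\d\mm=\int_0^\infty\Phi(r)\,\d V(r)$ with $V(r)=\mm(B_r(x_0))$, each of the three integrals in \ref{second} becomes a one-dimensional integral against $\d V$. Bishop--Gromov (which follows from \mcp) ensures that $V(r)/r^N$ is non-increasing, with limit $\omega_0:=\lim_{r\to 0^+}V(r)/r^N$; comparing with the Euclidean reference $N\omega_0 r^{N-1}\,\d r$ and passing to the limit $R\to\infty$ followed by $\lambda\to 0^+$, the sharpness of $(N-q)^2/p^2$ combined with the monotonicity of $V/r^N$ forces $V(r)=\omega_0 r^N$ for every $r>0$. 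Equivalently, $h_\gamma(t)/t^{N-1}$ is constant along $q$-a.e.\ ray, which is the volume cone condition.

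\emph{Main obstacle.} The delicate step is direction (a)$\Rightarrow$(b): transferring sharpness of the global product inequality into the pointwise rigidity of $V(r)/r^N$. The extremals are not compactly supported, so cut-offs are unavoidable, and their contributions to the three integrals must be controlled uniformly in $\lambda$ and $R$. Condition \eqref{equation1-2} is precisely what guarantees integrability at both $0$ and $\infty$, finiteness of the relevant scale-invariant ratios, and that the limit isolates exactly the cone defect $V(r)-\omega_0 r^N$; the Bishop--Gromov monotonicity then forces this defect to vanish identically.
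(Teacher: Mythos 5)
Your outline follows essentially the same architecture as the paper: needle decomposition (Cavalletti--Mondino localization) to reduce $(\mathrm{b})\Rightarrow(\mathrm{a})$ to a one-dimensional weighted CKN on each ray, followed by a Cauchy--Schwarz step in the ray variable to reassemble the product, and a Bishop--Gromov plus radial-trial-function argument for $(\mathrm{a})\Rightarrow(\mathrm{b})$. The $(\mathrm{b})\Rightarrow(\mathrm{a})$ half, including the identification of each needle density with $c_\gamma t^{N-1}$ on a volume cone and the integration-by-parts proof of the $1$D inequality, matches the paper's Step~1.

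However, in $(\mathrm{a})\Rightarrow(\mathrm{b})$ you have correctly identified the ingredients but not the mechanism that actually closes the argument, and your description of the limiting procedure is off. The decisive step in the paper is to substitute the whole one-parameter family $\widetilde u_\lambda=(\lambda+d_{x_0}^{2-q})^{1/(2-p)}$ into \ref{second} and observe that this turns the inequality into a \emph{first-order differential inequality} for
\begin{equation*}
P(\lambda)=\frac{p-2}{p}\int_X \frac{(\lambda+d_{x_0}^{2-q})^{p/(2-p)}}{d_{x_0}^{q}}\,\mathrm{d}\mm,
\end{equation*}
namely $\lambda P'(\lambda)\ge\bigl(\tfrac{N-q}{2-q}-\tfrac{p}{p-2}\bigr)P(\lambda)$ for every $\lambda>0$; the cone model $T$ satisfies the corresponding \emph{equality} $\lambda T'(\lambda)=\bigl(\tfrac{N-q}{2-q}-\tfrac{p}{p-2}\bigr)T(\lambda)$, hence $P/T$ is non-decreasing. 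Combined with the small-ball asymptotics $\liminf_{\lambda\to0^+}P(\lambda)/T(\lambda)\ge A/k$ (where $A$ is the density at the vertex coming from Bishop--Gromov) one obtains $P(\lambda)\ge (A/k)\,T(\lambda)$ for \emph{all} $\lambda$; writing both sides as integrals against $\mathrm{d}V(\rho)$ and using the Bishop--Gromov upper bound $V(\rho)\le A\omega_N\rho^N$ then forces $V(\rho)\equiv A\omega_N\rho^N$. Your phrasing ``passing to the limit $R\to\infty$ followed by $\lambda\to0^+$ \dots forces $V(r)=\omega_0 r^N$'' suggests the rigidity comes out of a single limit, whereas in fact the differential inequality has to hold for \emph{every} $\lambda>0$ and the $\lambda\to 0^+$ limit is used only to anchor the monotone ratio $P/T$. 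This ODE-comparison step is exactly the missing idea you flagged as the ``main obstacle,'' so the plan as written does not yet contain the argument that resolves it.
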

 
  \noindent   {\it Plan of the paper}. In Section \ref{section2}, we introduce some basic concepts and results on metric measure spaces. In Section \ref{section3}, we prove the rigidity of  \ref{first} on metric measure space in Theorem \ref{theorem1},  with the help of  the needle decomposition  and the Generalized Bishop--Gromov inequality. In Section \ref{section4},  we prove the rigidity of  \ref{second} in Theorem \ref{theorem2}.

    \medskip
    
    \noindent \textbf{Declaration.}
{The  authors declare that there is no conflict of interest and the manuscript has no associated data.}

\medskip

\noindent \textbf{Acknowledgement.} This project is supported in part by  the Ministry of Science and Technology of China, through the Young Scientist Programs (No. 2021YFA1000900  and 2021YFA1002200), and NSFC grant (No.12201596). The authors want to thank Alexandru Krist\'{a}ly for his suggestions on the first draft of this paper.
  
\section{Preliminaries}\label{section2}
  In this paper,  $(X,d)$ is a Polish space (i.e. a complete and separable metric space), and $\mm$ is a Radon measure on $X$ such that $0<\mm(U)<\infty$ for any non-empty bounded open set $U\in X$ (i.e. supp $\mm=X$). The triple $(X,d,\mm)$ is said to be a metric measure space.

    Denote by 
\begin{equation*}
	\text{Geo}(X):=\Big\{\gamma\in C([0,1],X):d(\gamma_s,\gamma_t)=|s-t|d(\gamma_0,\gamma_1), \,\,\forall s,t\in [0,1]\Big\}
\end{equation*}
   the space of constant-speed geodesics. We  assume that $(X,d)$ is a geodesic space, this means,  for each $x,y \in X$ there exists $\gamma\in \text{Geo}(X)$ so that $\gamma_0=x,\gamma_1=y$.

  Denote by $\mathscr{P}(X)$ the space of all Borel probability measures on X and by $\mathscr{P}_2(X)$ the space of probability measures with finite second moment. Endow $\mathscr{P}_2(X)$ with the $L^2$-Kantorovich--Wasserstein distance $W_2$ defined as follows: for any $ \mu_0,\mu_1 \in \mathscr{P}_2(X)$,   set
\begin{equation}\label{equation2-1}
	W^2_2(\mu_0,\mu_1):= \inf\limits_{\pi} \int_{X\times X}  d^2(x,y)\, \pi(\mathrm{d}x\mathrm{d}y),
\end{equation}
    where the infimum is taken over all $\pi \in\mathscr{P}(X\times X)$ with $\mu_0$ and $\mu_1$ as the first and the second marginals.

For any geodesic $(\mu_t)_{t\in[0,1]}$ in $(\mathscr{P}_2(X),W_2)$,  there is $\nu \in \mathscr{P}(\text{Geo}(X))$, so that $$(e_t)_\sharp \nu=\mu_t~ \text{ for all}~t\in [0,1]$$where $e_t$ denote the evaluation map 
\begin{equation*}
	e_t:\text{Geo}(X) \rightarrow X,\quad e_t(\gamma):=\gamma_t.
\end{equation*}  We denote by OptGeo($\mu_0,\mu_1$) the space of all $\nu \in \mathscr{P}(\text{Geo}(X))$ for which $(e_0,e_1)_\sharp \nu$ realizes the minimum in (\ref{equation2-1}), such a $\nu$ will be called dynamical optimal plan. If $(X,d)$ is geodesic, then  OptGeo($\mu_0,\mu_1$) is non-empty for any $\mu_0,\mu_1 \in \mathscr{P}_2(X)$.
    
    \medskip
    
    Next we briefly recall the synthetic notions of lower Ricci curvature bounds, for more details we refer to  \cite{MR2610378,MR2480619,MR2237206,MR2237207}. Given $K\in \R $ and $N\geq0$.   For $ (t,\theta)\in [0,1]\times\R_+$, we define the distortion coefficients as
\begin{equation*}
\sigma^{(t)}_{K,N}(\theta):=
\begin{cases}
    \infty &\text{if}\,\,K\theta^2\geq N \pi^2,\\
    \frac{\sin(t\theta\sqrt{K/N})}{\sin(\theta\sqrt{K/N})} &\text{if}\,\,0<K\theta^2< N \pi^2,\\
    t &\text{if}\,\,K\theta^2<0\,\,\text{and}\,\,N=0,\text{or if}\,\, K\theta^2=0,\\
    \frac{\sinh(t\theta\sqrt{-K/N})}{\sinh(\theta\sqrt{-K/N})} &\text{if}\,\,K\theta^2\leq0\,\,\text{and}\,\, N>0.\\
\end{cases}
\end{equation*}	

    We also set, for $K\in \R, N\in [1,\infty)$ and $(t,\theta)\in [0,1]\times\R_+$,
\begin{equation*}
	\tau^{(t)}_{K,N}(\theta):=t^{\frac{1}{N}}\sigma^{(t)}_{K,N-1}(\theta)^{\frac{N-1}{N}}.
\end{equation*}

Recall the following notion of essentially non-branching  \cite{MR3216835}.
\begin{definition}
	A set  $G \in \mathrm{Geo}(X) $ is a set of non-branching geodesics if  for any $\gamma^1,\gamma^2 \in G$, it holds:
\[
\exists t \in (0,1) ~~\text{s.t.}~ ~\forall  s\in [0, t]~ \gamma_s^1 =\gamma_s^2 \Rightarrow  \forall s \in [0,1] \;\; \gamma_s^1 =\gamma_s^2 . 
\]
\end{definition}  
  
\begin{definition}
     A metric measure space $(X,d,\mm)$ is called essentially non-branching if  for any $\mu_0,\mu_1 \in \mathscr{P}_2(X)$ with $\mu_0, \mu_1\ll \mm$, any element of $\mathrm{OptGeo}(\mu_0,\mu_1)$ is concentrated on a set of non-branching geodesics.
\end{definition}
    If $(X,d)$ is a smooth Riemannian manifold, then any subset $ G \subset \mathrm{Geo}(X) $ is a set of non-branching geodesics.  More generally,  it is known that RCD spaces are essentially non-branching.
    
   \medskip
    
    Next we recall the definition of MCP$(K,N)$ given independently in \cite{MR2341840} and \cite{MR2237207}. Generally these two definitions are slightly different,  but on essentially non-branching spaces they coincide (see for instance Appendix A in \cite{MR3691502} or Proposition 9.1 in \cite{MR4309491}). We adopt the one given in \cite{MR2341840}.
\begin{definition}\label{define2-2}
	Let $K\in\R$ and $N\in [1,\infty)$. A metric measure space $(X,d,\mm)$ satisfies $\mcpkn$ if for any $\mu_0 \in \mathscr{P}_2(X)$ of the form
\begin{equation*}
	\mu_0=\frac{1}{\mm(A)}\mm\llcorner_A
\end{equation*}
    for some Borel set $A\subset X$ with $\mm(A)\in(0,\infty)$, and any $o\in X$ there exists $\nu \in \mathrm{OptGeo}(\mu_0,\delta_o)$ such that:
\begin{equation*}
	\frac{1}{\mm(A)}\mm\geq  (e_t)_\sharp \left(\tau^{(1-t)}_{K,N}(d(\gamma_0,\gamma_1))^N\nu(\mathrm{d}\gamma) \right) \quad \forall t\in [0,1].
\end{equation*}
\end{definition}

	 From \cite{MR4309491}, we know that in the setting of essentially non-branching spaces, Definition \ref{define2-2} is equivalent to the following condition: for all $\mu_0,\mu_1\in \mathscr{P}_2(X)$ with $\mu_0\ll \mm$, there exists a unique $\nu \in \mathrm{OptGeo}(\mu_0,\mu_1)$ such that for all $t \in [0,1)$, $\mu_t=(e_t)_\sharp \nu\ll \mm$ and 
\begin{equation}\label{mcp}
	\rho_t^{-\frac{1}{N}}(\gamma_t)\geq	\tau_{K,N}^{(1-t)}(d(\gamma_0,\gamma_1))\rho_0^ {-\frac{1}{N}}(\gamma_0),\quad \text{for}\,\,\nu\text{-}a.e.\,\gamma \in \text{Geo}(X),
\end{equation}
    where $\mu_t=\rho_t\mm$.

\begin{definition}
	Let $K\in\R$ and $N\in [1,\infty)$.  We say that a metric measure space  is a volume cone if there exists $x_0\in X$,  such that
\begin{equation*}
	\frac{\mm(B_{R}(x_0))}{\mm(B_{r}(x_0))}=\left(\frac{R}{r}\right)^N, \quad\forall 0<r<R, 
\end{equation*}
    where $B_{r}(x_0):=\{x\in X:d(x_0,x)<r\}$.
\end{definition}
     
    In order to prove our main results, we need the following  theorem about needle decomposition (here we only consider the case $K=0$).
\begin{theorem}\label{theorem3}
	Let $(X,d,\mm)$ be an essentially non-branching ${\rm MCP}(0,N)$ metric measure space  for some $N\in(1,\infty)$. Then for any $1$-Lipschitz function u: X $\rightarrow \R$, there exists an $\mm$-measurable transport subset $\mathcal{T} \in X$ and a family \{$X_\alpha\}_{\alpha\in Q}$ of subsets of $X$, such that there exists a disintegration of $\mm\llcorner_{\mathcal{T}}$ on \{$X_\alpha\}_{\alpha\in Q}$:
\begin{equation*}
	\mm\llcorner_{\mathcal{T}}=\int_Q \mm_\alpha\, \mathfrak{q} (\mathrm{d} \alpha),\qquad \mathfrak{q}(Q)=1,
\end{equation*}
	and for $\mathfrak{q}$-a.e. $\alpha\in Q$: 
\begin{enumerate}
	\item $X_\alpha$ is a closed geodesic in $(X,d)$,
	\item $\mm_\alpha$ is a Radon measure supported on $X_\alpha$ with $\mm_\alpha=h_\alpha \mathcal{H}^1\llcorner_{X_\alpha}\ll \mathcal{H}^1\llcorner_{X_\alpha}$,
	\item The metric measure space $(X_\alpha,d,\mm_\alpha)$ verifies ${\rm MCP}(0,N)$.
\end{enumerate}
	Here $\mathcal{H}^1$ denotes the one-dimensional Hausdorff measure, \{$X_\alpha\}_{\alpha\in Q}$ are called transport rays,   two distinct transport rays can only meet at their extremal points.
\end{theorem}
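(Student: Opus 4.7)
The plan is to follow the needle decomposition / $L^1$-localization paradigm introduced by Klartag in the smooth Euclidean setting and extended by Cavalletti--Mondino to essentially non-branching spaces with synthetic Ricci bounds. One attaches to the 1-Lipschitz function $u$ the closed transport relation
\begin{equation*}
R_u := \bigl\{(x,y)\in X\times X\, :\, u(x)-u(y)=d(x,y)\bigr\},
\end{equation*}
which, since $u$ is 1-Lipschitz, is a partial order on $X$. Points that lie strictly between two distinct $R_u$-comparable points form the \emph{transport set} $\mathcal{T}$; maximal totally ordered chains of $R_u$ in $\mathcal T$ are the transport rays $X_\alpha$, and the Lipschitz equality forces each $X_\alpha$ to be an isometric image of an interval, i.e. a geodesic of $(X,d)$. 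Two distinct rays can only meet at endpoints, for if they shared an interior point one would produce two $R_u$-chains bifurcating at that point.

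Next I would show that the set of branch/end points of $R_u$ is $\mm$-negligible on $\mathcal T$. This is the step where essentially non-branching is used: a non-trivial overlap of rays in $\mathcal T$ would translate, via reparametrisation, to a branching of $W_2$-geodesics, contradicting the hypothesis on ${\rm OptGeo}$. Once this negligibility is in hand, the partition $\{X_\alpha\}_{\alpha\in Q}$ is $\mm$-well defined and the abstract disintegration theorem yields a quotient probability space $(Q,\mathfrak q)$ with conditional measures $\mm_\alpha$ concentrated on $X_\alpha$. The Bishop--Gromov type upper bounds available under $\mcp$ exclude atoms along a ray and supply the regularity needed to write $\mm_\alpha = h_\alpha\, \mathcal H^1\llcorner_{X_\alpha}$, which is (2).

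The decisive third point, that each $(X_\alpha,d,\mm_\alpha)$ itself satisfies $\mcp$, I would establish by a lift-and-project argument. Given a Borel $A\subset X_\alpha$ with $\mm_\alpha(A)\in(0,\infty)$ and a point $o\in X_\alpha$, the $L^2$-optimal dynamical plan from $\frac{1}{\mm_\alpha(A)}\mm_\alpha\llcorner_A$ to $\delta_o$ remains inside the single geodesic $X_\alpha$. Disintegrating against $\mathfrak q$ on a small neighbourhood of $\alpha$ produces a plan in $(X,d,\mm)$ which is still optimal, because the cost splits additively along the $u$-direction across distinct rays. Applying the ambient characterization \eqref{mcp} to this lifted plan, and then disintegrating back, converts the $N$-dimensional Jacobian inequality into the one-dimensional $\mcp$ inequality for $h_\alpha$ along $X_\alpha$ at $\mathfrak q$-a.e.\ $\alpha$.

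The main obstacle is precisely this last step: the passage from the ambient $\mcp$ to the fibre $\mcp$ hinges on the compatibility between $W_2$-optimal transport in $(X,d)$ and $W_2$-optimal transport \emph{within} each ray, and this compatibility depends crucially on the essentially non-branching hypothesis (to guarantee uniqueness of the optimal dynamical plan and to rule out mass migrating between rays). Care is required in the $\mm$-measurability of $\mathcal T$, of the quotient map $x\mapsto \alpha(x)$, and of the family $\{X_\alpha\}_{\alpha\in Q}$, for which a selection argument on level sets of $u$ combined with the closedness of $R_u$ does the job.
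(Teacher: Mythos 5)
Note that the paper itself gives no proof of Theorem \ref{theorem3}: it is imported from the Cavalletti--Mondino/Cavalletti--Milman localization machinery (cf.\ \cite{MR3691502,MR4309491,MR4175820}), so there is no internal proof to compare against. Your outline does follow the standard route in that literature, but two steps are stated loosely enough to leave genuine gaps. Your opening paragraph claims that two distinct maximal $R_u$-chains ``can only meet at their extremal points'' as a consequence of the $R_u$ structure alone; in a general geodesic space this is false, since rays of a $1$-Lipschitz function can branch at interior points. What one actually proves, using essential non-branching together with the ambient ${\rm MCP}$, is that the set of branch points is $\mm$-negligible, and only after discarding this null set do the surviving rays partition the transport set. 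Your next sentence invokes essential non-branching for exactly this negligibility, which shows that the earlier ``rays meet only at endpoints'' assertion cannot be taken as given.

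The more serious gap is step (3). You justify optimality of the tube-lifted plan by asserting that ``the cost splits additively along the $u$-direction across distinct rays''; for the quadratic cost $d^2$ that is not a proof, and additivity in the $u$-coordinate does not by itself imply $W_2$-optimality. What is actually needed is a nontrivial cyclical-monotonicity lemma: any transport plan that is $d$-monotone along the rays of a $1$-Lipschitz function is automatically $d^2$-cyclically monotone, and since on essentially non-branching ${\rm MCP}$ spaces the optimal dynamical plan between absolutely continuous marginals is unique, this identifies the lifted plan as the optimal one. Moreover the fibre ${\rm MCP}$ inequality is equivalent to the one-dimensional density condition \eqref{1dmcp}, and the within-ray transport from $A$ to $\delta_o$ is not $d$-monotone when $A$ straddles $o$ on the ray; one must split $A$ into its two monotone pieces (or argue directly via \eqref{1dmcp} with translation plans along the ray), lift to a shrinking tube of neighbouring rays, apply \eqref{mcp}, disintegrate, and pass to the limit. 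These two lemmas are precisely where the substance of the localization theorem lies, and both are absent from your proposal.
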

    It is worth recalling that, if $ h_\alpha$ is an ${\rm MCP}(0,N)$ density on $I\subset \R$, then for all $x_0, x_1 \in I$ and $t\in [0,1] $,
\begin{equation}\label{1dmcp}
	h_\alpha(tx_1+(1-t)x_0)\geq (1-t)^{N-1}h(x_0).
\end{equation}
    
\begin{remark}\label{remark3}
	If we take $u$ as the distance function $d_{x_0}$, then  for $\mathfrak{q}$-a.e.$\alpha \in Q$, $X_\alpha$ has an extremal point $x_0$.  For more discussions  we refer to Cavalletti--Mondino \cite{MR4175820}.
\end{remark}

 At the end of this part,  we recall the \emph{Generalized Bishop--Gromov volume growth inequality} (cf.  \cite[Remark 5.3]{MR2237207}).
    
\begin{theorem}\label{theorem4}
	Assume that  $(X,d,\mm)$ satisfies ${\rm MCP}(0,N)$ for some $N>1$. Then for any $x\in X$, 
\begin{equation}\label{equation2-2}\tag{GBGI}
	\frac{\mm(B_{r}(x))}{r^N}\geq \frac{\mm(B_{R}(x))}{R^N},\quad \forall 0<r<R.
\end{equation}
\end{theorem}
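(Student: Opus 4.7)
The plan is to apply Definition~\ref{define2-2} directly, with $A = B_R(x)$ and contraction point $o = x$. Since $\mm$ is Radon with full support and $B_R(x)$ is a non-empty bounded open set, $\mm(B_R(x)) \in (0,\infty)$, so $\mu_0 := \frac{1}{\mm(B_R(x))} \mm\llcorner_{B_R(x)}$ is a well-defined probability measure; its second moment is dominated by $(R + d(x,x_0))^2$ for any reference point $x_0$, hence $\mu_0 \in \mathscr{P}_2(X)$. The $\mcpkn$ hypothesis (with $K=0$) then furnishes $\nu \in {\rm OptGeo}(\mu_0,\delta_x)$ satisfying
\begin{equation*}
\frac{1}{\mm(B_R(x))}\,\mm \;\geq\; (e_t)_\sharp\bigl(\tau^{(1-t)}_{0,N}(d(\gamma_0,\gamma_1))^N\,\nu(\d\gamma)\bigr), \qquad \forall\, t\in[0,1],
\end{equation*}
as measures on $X$.

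The second step is to trivialize the distortion. Because $K=0$, one has $\sigma^{(1-t)}_{0,N-1}(\theta) = 1-t$ for every $\theta \geq 0$, and therefore $\tau^{(1-t)}_{0,N}(\theta) = (1-t)^{1/N}(1-t)^{(N-1)/N} = 1-t$ independently of $\theta$. Consequently the previous inequality collapses to the simple bound
\[
\mm \;\geq\; (1-t)^N\,\mm(B_R(x))\,(e_t)_\sharp\nu.
\]
Moreover, $\nu$ is concentrated on geodesics $\gamma$ with $\gamma_1 = x$ and $d(\gamma_0,x) < R$ for $\mu_0$-a.e.\ $\gamma$, so the identity $d(x,\gamma_t) = (1-t)\,d(x,\gamma_0)$ implies that $(e_t)_\sharp\nu$ is concentrated in $B_{(1-t)R}(x)$.

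Finally, I would evaluate both sides on $B_{(1-t)R}(x)$, which gives
\[
\mm(B_{(1-t)R}(x)) \;\geq\; (1-t)^N\,\mm(B_R(x))\,\bigl((e_t)_\sharp\nu\bigr)(B_{(1-t)R}(x)) \;=\; (1-t)^N\,\mm(B_R(x)),
\]
since $(e_t)_\sharp\nu$ is a probability measure. Substituting $r := (1-t)R$, which sweeps through $(0,R)$ as $t$ ranges over $(0,1)$, produces $\mm(B_r(x))/r^N \geq \mm(B_R(x))/R^N$, i.e.\ \eqref{equation2-2}. The only delicate point I expect is verifying that the strict bound $d(\gamma_0,x) < R$, which holds for $\mu_0$-a.e.\ starting point, transfers to the pushforward so that $(e_t)_\sharp\nu$ has full mass inside the open ball $B_{(1-t)R}(x)$; this is immediate from the linearity of $s \mapsto d(x,\gamma_s)$ along any constant-speed geodesic terminating at $x$.
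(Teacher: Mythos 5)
Your proof is correct. Note that the paper itself does not prove Theorem~\ref{theorem4}; it simply cites it as the Generalized Bishop--Gromov inequality from Sturm's work (Remark~5.3 in the reference there). Your self-contained derivation from Definition~\ref{define2-2} is the standard argument: take $A = B_R(x)$, $o = x$, observe that for $K=0$ the distortion coefficient collapses to $\tau^{(1-t)}_{0,N}(\theta) = 1-t$ independently of $\theta$, push forward to see that $(e_t)_\sharp\nu$ is a probability measure concentrated in $B_{(1-t)R}(x)$, and evaluate the resulting measure inequality on that ball. Every step checks out, including the delicate point you flag: since $\mu_0 = (e_0)_\sharp\nu$ gives full mass to the open ball $B_R(x)$ and $d(x,\gamma_t) = (1-t)\,d(x,\gamma_0)$ along geodesics ending at $x$, the pushforward indeed has full mass in the open ball $B_{(1-t)R}(x)$. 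This is exactly the proof one would reconstruct from the cited source, so there is no meaningful divergence from the paper's (implicit) approach.
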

\section{Heisenberg--Pauli--Weyl uncertainty principle}\label{section3}
    In this section, we will prove  Theorem \ref{theorem1} in five steps. We obtain the Heisenberg--Pauli--Weyl uncertainty principle on volume cone by using needle decomposition, which reduces the problem to one-dimensional metric measure spaces. Conversely, we deduce the rigidity using Generalized Bishop--Gromov inequality.\medskip
\begin{proof}[Proof of Theorem \ref{theorem1}]
	
	~\\
	${\bf (b) \Rightarrow (a)}$:
	
    {\it Step 1}. Assume that the vertex of the volume cone is $O$ and let $d(x):=d_O(x)$. By Theorem  $\ref{theorem3}$ (see also Remark $\ref{remark3}$), we have
\begin{equation}\label{equation3-1}
    \mm\llcorner_{\mathcal{T}}=\int_Q \mm_\alpha\, \mathfrak{q} (\mathrm{d} \alpha),\qquad \mathfrak{q}(Q)=1,\qquad \mm_\alpha=h_\alpha \mathcal{H^1}\llcorner_{X_\alpha},
\end{equation}
    and for $\mathfrak{q}$-a.e.$\alpha \in Q$, $X_\alpha$ has  $O$ as an extremal point.
    
    Consider the optimal transport problem between the probability measures 
\begin{equation*}
	\mu_0=\frac{1}{\mm(B_R(O))}\mm\llcorner_ {B_R(O)},\qquad\text{and}~\qquad \mu_1=\delta_O.
\end{equation*}
    By Definition \ref{define2-2},  there exists a unique geodesic $(\mu_t)_{t\in [0,1]} $ in $(\mathscr{P}_2(X),W_2)$ connecting $\mu_0$ and $\mu_1$,  and there is a measure $\nu \in \mathscr{P}(\text{Geo}(X))$, so that $(e_t)_\sharp \nu=\mu_t$ for all $t\in [0,1)$.  By measure contraction property \eqref{mcp} we can see that $\mu_t$ is concentrated on a subset of $B_{(1-t)R}(O)$, with measure no less than $(1-t)^N \mm(B_R(O))$.  Since  $(X, d, \mm)$ is a volume cone,  $\mu_t$ must has full support in $B_{(1-t)R}(O)$. Since $R>0$ is arbitrary,  we can see that $\mm(X\setminus {\mathcal{T}})=0$, and for $\mathfrak{q}$-a.e.$\alpha \in Q$,   $(X_\alpha,d,\mm_\alpha)$ is also a volume cone.
    
   Combining \eqref{equation2-2} and \eqref{1dmcp}, we know that for $\mathfrak{q}$-a.e.$\alpha\in Q$, $(X_\alpha,d,\mm_\alpha)$ can be identified with  $([0,\infty),|\cdot|,c_{\alpha}x^{N-1}\mathrm{d}x)$, where $c_\alpha$ is a positive constant.\medskip
    
    {\it Step 2}. Fix $\alpha \in Q$, denote by $\tilde{\Delta}_\alpha$ the weighted Laplacian
\begin{equation*}
	\tilde{\Delta}_\alpha:=\Delta -\langle \nabla V_\alpha,\nabla \cdot\rangle
\end{equation*}
    where $V_\alpha(x)$ is given by $e^{-V_\alpha(x)}=c_\alpha x^{N-1}$, and $\Delta, \nabla \cdot$ are understood as directional derivatives in the usual sense.
    
    Fix $u\in$ {Lip}$_c(X, d)\setminus\{0\}$.  On one hand,  for $d(x)=x$ we have $\tilde{\Delta}_\alpha(d^2)=2N$ and
\begin{equation}\label{equation3-2}  
\begin{aligned}
	\left(\int_{X_\alpha} \tilde{\Delta}_\alpha(d^2) u^2 \, \mathrm{d}\mm_\alpha\right)^2
	&=4N^2\left(\int_{X_\alpha} u^2\, \mathrm{d}\mm_\alpha\right)^2.
\end{aligned}
\end{equation} 
    On the other hand, by integration by parts
\begin{equation}\label{equation3-3}  
\begin{aligned}
    \left(\int_{X_\alpha} \tilde{\Delta}_\alpha(d^2) u^2 \, \mathrm{d}\mm_\alpha\right)^2
    &=\left(-\int_0^\infty \left<\nabla (u^2),\nabla (d^2)\right>  \, \mathrm{d}\mm_\alpha \right)^2\\
    &=\left(\int_0^\infty -4ud\left<\nabla u,\nabla d \right> \, \mathrm{d}\mm_\alpha \right)^2\\
 \text{Cauchy-Schwartz}~~ &\leq 16\left(\int_{X_\alpha} d^2 u^2\, \mathrm{d}\mm_\alpha\right) \left(\int_{X_\alpha} |\nabla u|^2\, \mathrm{d}\mm_\alpha\right).
\end{aligned}
\end{equation}
  
    Combining (\ref{equation3-2}) and (\ref{equation3-3}), we obtain
\begin{equation}\label{equation3-4} 
	\left(\int_{X_\alpha} |\nabla u|^2\, \mathrm{d}\mm_\alpha\right) \left(\int_{X_\alpha} d^2 u^2 \, \mathrm{d}\mm_\alpha\right)\geq \frac{N^2}{4}\left(\int_{X_\alpha} u^2\, \mathrm{d}\mm_\alpha\right)^2.\medskip
\end{equation}	

    {\it Step 3}. Denote $C_\alpha:=\int_{X_\alpha} d^2 u^2 \,\mm_\alpha(\mathrm{d}x)>0$,\, $\widetilde{\mm}_\alpha (\mathrm{d}x):=\frac{\mm_\alpha (\mathrm{d}x)}{C_\alpha}$,\, $\widetilde{\mathfrak{q}}(\mathrm{d} \alpha):=C_\alpha \mathfrak{q}(\mathrm{d} \alpha)$.  We can see that $\{\widetilde{\mm}_\alpha\}_{\alpha\in Q}$ is also a disintegration of $\mm$
\begin{equation*}
	\int_Q \widetilde{\mm}_\alpha\, \widetilde{\mathfrak{q}}(\mathrm{d} \alpha)=\int_Q \frac{\mm_\alpha}{C_\alpha}\cdot C_\alpha \mathfrak{q} (\mathrm{d} \alpha)=\int_Q \mm_\alpha\, \mathfrak{q} (\mathrm{d} \alpha)=\mm.  
\end{equation*}

    Fix $\alpha \in Q$, multiplying $1/{C^2_\alpha}$ on both sides of $(\ref{equation3-4})$, we have
\begin{equation}\label{equation3-5}
	\left(\int_{X_\alpha} |\nabla u|^2\,\d\widetilde{\mm}_\alpha \right) \left(\int_{X_\alpha} d^2 u^2 \,\d\widetilde{\mm}_\alpha \right)\geq \frac{N^2}{4}\left(\int_{X_\alpha} u^2 \,\d\widetilde{\mm}_\alpha\right)^2.
\end{equation}	
Note that
\begin{equation}\label{equation3-6}
\begin{aligned}
	&\int_{X_\alpha} d^2 u^2 \,\d\widetilde{\mm}_\alpha\equiv1, \quad  \forall \alpha \in Q,\\
	\int_X d^2 u^2& \mathrm{d}\mm=\int_Q\int_{X_\alpha} d^2 u^2 \,\widetilde{\mm}_\alpha(\mathrm{d}x) \widetilde{\mathfrak{q}}(\mathrm{d} \alpha)=\widetilde{\mathfrak{q}}(Q).
\end{aligned}
\end{equation}
    Combining above identities and integrating $\alpha$ on both sides of (\ref{equation3-5}), we conclude that 
\begin{equation}\label{equation3-7}
	\int_Q \left(\int_{X_\alpha} |\nabla u|^2\,\widetilde{\mm}_\alpha(\mathrm{d}x) \right)\widetilde{\mathfrak{q}}(\mathrm{d} \alpha)\geq \frac{N^2}{4}\int_Q \left(\int_{X_\alpha} u^2 \,\widetilde{\mm}_\alpha(\mathrm{d}x)\right)^2\widetilde{\mathfrak{q}}(\mathrm{d} \alpha).
\end{equation} 
    Multiplying $\widetilde{\mathfrak{q}}(Q)$ on both sides of (\ref{equation3-7}) and combining 
\begin{equation*}
    	|\nabla u(x)|\leq |\text{lip}\,u(x)|\qquad a.e. \,x\in X_\alpha ,~\forall\alpha \in Q,
\end{equation*}
  by Cauchy-Schwartz inequality, we get
\begin{equation}\label{equation3-8}
\begin{aligned}
    &\left(\int_X |\,\text{lip}\,u(x)\,|^2\, \mm(\mathrm{d}x)\right)
     \left(\int_X d^2 u^2(x)\, \mm(\mathrm{d}x)\right)\\
    &\geq\frac{N^2}{4}\left(\int_Q \left(\int_{X_\alpha} u^2(x) \,\widetilde{\mm}_\alpha(\mathrm{d}x)\right)^2 \widetilde{\mathfrak{q}} (\mathrm{d} \alpha)\right)\cdot\left(\int_Q 1^2\, \widetilde{\mathfrak{q}}(\mathrm{d} \alpha)\right)\\
    &\geq \frac{N^2}{4}\left(\int_Q \int_{X_\alpha} u^2(x) \,\widetilde{\mm}_\alpha(\mathrm{d}x) \widetilde{\mathfrak{q}}(\mathrm{d} \alpha)\right)^2\\
    &=\frac{N^2}{4}\left(\int_X u^2(x)\, \mm(\mathrm{d}x)\right)^2.
\end{aligned}
\end{equation}
    which is \ref{first} with $x_0=O$.
    
    \medskip

    {\it Step 4}. Following Krist\'aly and Ohta \cite{MR3862150,MR3096522} we can prove the sharpness of $\frac{N^2}{4}$. Since $(X,d,\mm)$ is a volume cone with vertex $O$,  we have 
\begin{equation*}
    \frac{\mm(B_R(O))}{\mm(B_r(O))}=\left(\frac{R}{r}\right)^N,\quad \forall 0<r<R.
\end{equation*}
    Without loss of generality, we can assume that
\begin{equation}\label{equation3-9}
    \mm(B_{\rho}(O))=k \omega_N \rho^N, \quad  \forall \rho>0,
\end{equation}
    where $k$ is a positive constant and $\omega_N:=\pi^{\frac{N}{2}}/\Gamma(\frac{N}{2}+1)$  is the volume of the unit ball in $\R^N$ (cf. \cite{MR0717827}).
    
For each $\lambda>0$, consider the sequence of functions $ u_{\lambda,k}: X\rightarrow \R,k\in \N$ defined as
\begin{equation*}
	u_{\lambda,k}(x):=\max\big\{0,\min\{0,k-d(x)\}+1\big\}e^{-\lambda d^2(x)}.
\end{equation*}
  Notice that supp $	u_{\lambda,k}$=\{$x\in X: d(x)\leq k+1$\} and MCP spaces  are locally compact. For any $\lambda>0$ and $k\in \N$, we have $u_{\lambda,k}\in$ Lip$_c(X, d)$ so  it satisfies  \eqref{equation3-8}. Set
\begin{equation*}
	u_\lambda(x):=\lim_{k\rightarrow \infty} u_{\lambda,k}(x)=e^{-\lambda d^2(x)}.
\end{equation*}
A simple approximation procedure based on (\ref{equation3-9}) shows that $u_\lambda$ verifies (\ref{equation3-8}) as well. Next we will prove that $u_\lambda$ attains the equalities in (\ref{equation3-8}).
    
    Firstly consider a function $T:(0,\infty)\rightarrow \R$ defined as 
\begin{equation*}
	T(\lambda)=\int_X e^{-2\lambda d^2} \,\mathrm{d} \mm.
\end{equation*}
    It is well-defined and differentiable, and we have 
\begin{equation*}
	T'(\lambda)=\int_X (-2d^2)e^{-2\lambda d^2} \,\mathrm{d} \mm.
\end{equation*} 
Note that 
\[
|\text{lip}\,u_\lambda|^2=|2\lambda  d\, \text{lip}(d)\,e^{-\lambda d^2}|^2\leq4\lambda^2d^2 e^{-2\lambda d^2}.
\]It is sufficient to check   the following equation:
\begin{equation}\label{equation3-10}
    -\lambda T'(\lambda)=\frac{N}{2}T(\lambda), \qquad \forall \lambda>0,
\end{equation}	
     equivalently,
\begin{equation}\label{equation3-11}
	T(\lambda)=C \lambda^{-\frac{N}{2}}~~\,\text{ for some }C>0.
\end{equation}
In fact, by the layer cake representation (or Cavalieri's formula)  and changing a variable, we have
\begin{align*}
	T(\lambda)&=\int_{0}^{\infty} \mm(\{x\in X:e^{-2\lambda d^2}>t\}) \,\mathrm{d}t \\
	&=4\lambda \int_{0}^{\infty} \mm\big(B_{\rho}(O)\big) \rho e^{-2\lambda \rho^2} \, \mathrm{d} \rho\\
	&=4\lambda k \omega_N \int_{0}^{\infty} \rho^{N+1} e^{-2\lambda \rho^2}\, \mathrm{d} \rho\\ \text{set}~2\lambda \rho^2=y~~~~
	&=\frac{1}{(2\lambda)^{\frac{N}{2}}} k \omega_N \int_{0}^{\infty} y^\frac{N}{2} e^{-y}\, \mathrm{d}y\\
	&=\frac{1}{(2\lambda)^{\frac{N}{2}}} k \omega_N \Gamma(\frac{N}{2}+1)\\
	&=(\frac{\pi}{2})^{\frac{N}{2}} k \lambda^{-\frac{N}{2}}
\end{align*}
 which is the thesis.
     
     \medskip
    ${\bf (a) \Rightarrow (b)}$:
    
    {\it Step 5}. By Theorem \ref{theorem4} we know $\rho \mapsto \frac{\mm(B_{\rho}(x_0))}{\rho^N}$ is non-increasing on $(0, \infty)$.  Without loss of generality, we assume
\begin{equation}\label{equation3-12}
    \lim_{\rho \rightarrow 0^+} \frac{\mm(B_{\rho}(x_0))}{\omega_N \rho^N}=A,\quad A>0\,\,\text{is a finite constant},
\end{equation}
so that
\begin{equation}\label{equation3-13}
	\mm(B_{\rho}(x_0))\leq A \omega_N \rho^N,\qquad \forall \rho >0.
\end{equation}

Consider the function
\begin{equation}\label{equation3-14}
    T(\lambda)=4\lambda k \omega_N \int_{0}^{\infty} \rho^{N+1} e^{-2\lambda \rho^2}\, \mathrm{d} \rho=\frac{2}{(2\lambda)^{\frac{N}{2}}} k \omega_N \int_{0}^{\infty} t^{N+1} e^{-t^2}\, \mathrm{d}t,
\end{equation} 
this function certainly satisfies (\ref{equation3-10}).
    
    Fix $x_0 \in X $, we consider the class of functions
\begin{equation}\label{equation3-15}
	\widetilde{u}_\lambda(x)=e^{-\lambda d_{x_0}^2(x)},\quad \lambda>0.
\end{equation}
Similarly, we can approximate $\widetilde{u}_\lambda$ by elements in Lip$_c(X, d)$. Inserting $\widetilde{u}_\lambda$ into \ref{first}, we obtain 
\begin{equation}\label{equation3-16}
    2\lambda \int_X d^2_{x_0} e^{-2\lambda d_{x_0}^2} \,\mathrm{d} \mm\geq\frac{N}{2} \int_X e^{-2\lambda d_{x_0}^2} \,\mathrm{d} \mm,\quad \lambda>0.
\end{equation}
    We introduce a function $P: (0,\infty)\rightarrow \R$ defined by
\begin{equation*}
	P(\lambda)=\int_X e^{-2\lambda d_{x_0}^2} \,\mathrm{d} \mm,\quad \lambda>0.
\end{equation*}
    It is well-defined and differentiable.
    By the layer cake representation, the function can be equivalently rewritten to 
\begin{equation*}
	P(\lambda)=\int_{0}^{\infty} \mm\big(\{x\in X:e^{-2\lambda d_{x_0}^2}>t\}\big) \,\mathrm{d}t=4\lambda \int_{0}^{\infty} \mm(B_{\rho}(x_0))\cdot \rho e^{-2\lambda \rho^2} \, \mathrm{d} \rho.
\end{equation*}
     Thus relation (\ref{equation3-16}) is equivalent to 
\begin{equation}\label{equation3-17}
	-\lambda P'(\lambda) \geq \frac{N}{2}P(\lambda), \quad  \lambda>0.
\end{equation}
    We claim that 
\begin{equation*}
	P(\lambda)\geq \frac{A}{k}T(\lambda), \qquad \forall \lambda>0.
\end{equation*}
    By (\ref{equation3-10}) and (\ref{equation3-17}), it turns out that 
\begin{equation*}
	\frac{P'(\lambda)}{P(\lambda)} \leq \frac{T'(\lambda)}{T(\lambda)},\quad \lambda>0. 
\end{equation*}
    Integrating this inequality, it yields that the function  $\lambda\mapsto \frac{P(\lambda)}{T(\lambda)} $ is non-increasing; in particular, for every $\lambda>0$,
\begin{equation*}
	\frac{P(\lambda)}{T(\lambda)}\geq\liminf_{\lambda\rightarrow \infty }\frac{P(\lambda)}{T(\lambda)}.
\end{equation*}
    To prove the claim we only need to show 
\begin{equation}\label{equation3-18}
	\liminf_{\lambda\rightarrow \infty }\frac{P(\lambda)}{T(\lambda)}\geq \frac{A}{k}.
\end{equation}
    Due to (\ref{equation3-12}), for any $\varepsilon>0$ small enough, we can find $\rho_\varepsilon>0$ such that 
\begin{equation*}
	\mm(B_{\rho}(x_0))\geq (A-\varepsilon) \omega_N \rho^N,\quad \forall  \rho \in [0,\rho_\varepsilon].
\end{equation*}
    Consequently, we have
\begin{align*}
	P(\lambda)&=4\lambda \int_{0}^{\infty} \mm(B_{\rho}(x_0))\cdot \rho e^{-2\lambda \rho^2} \, \mathrm{d} \rho\\
	&\geq 4\lambda (A-\varepsilon) \omega_N \int_{0}^{\rho_\varepsilon} \rho^{N+1} e^{-2\lambda \rho^2}\, \mathrm{d} \rho \\ \text{set}~2\lambda \rho^2=t^2~~~~
	&=\frac{2}{(2\lambda)^{\frac{N}{2}}} (A-\varepsilon) \omega_N \int_{0}^{\sqrt{2\lambda}\rho_\varepsilon} t^{N+1} e^{-t^2}\, \mathrm{d}t.
\end{align*}
    Combining (\ref{equation3-14}) we get
\begin{equation*}
	\liminf_{\lambda\rightarrow \infty }\frac{P(\lambda)}{T(\lambda)}\geq \frac{A-\varepsilon}{k}.
\end{equation*}
    Since $\varepsilon>0$ is arbitrary, relation (\ref{equation3-18}) holds and we complete the proof of the claim.

    From the claim we know
\begin{align*}
	4\lambda \int_{0}^{\infty} \left[\mm(B_{\rho}(x_0))-\frac{A}{k} \cdot k \omega_N \rho^N \right] \rho e^{-2\lambda \rho^2} \mathrm{d} \rho \geq 0, \qquad \forall \lambda>0.
\end{align*}
    Due to relation (\ref{equation3-13}), we have
\begin{equation*}
	\mm(B_{\rho}(x_0))=A \omega_N \rho^N, \qquad \forall \rho>0,
\end{equation*} 
 so $(X,d,\mm)$ is a volume cone.
\end{proof}
\section{Caffarelli--Kohn--Nirenberg inequality}\label{section4}
    In this section, we will prove the rigidity of \ref{second}. Its proof  is similar to Theorem \ref{theorem1}, so we   adopt   the same notations as in the proof of Theorem \ref{theorem1} and omit some details.
    
    \medskip
\begin{proof}[Proof of Theorem \ref{theorem2}]	
	
	~	
	
	${\bf (b)\Rightarrow (a)}$:
	
	 {\it Step 1}. 
		Similarly,  for $\mathfrak{q}$-a.e.$\alpha\in Q$,  $(X_\alpha,d,\mm_\alpha)$ can be identified with $([0,\infty),|\cdot|,c_\alpha x^{N-1}\mathrm{d}x)$ and  $ d\tilde{\Delta}_\alpha d=N-1$ on  $(X_\alpha,d,\mm_\alpha)$. 
	
	Fix $u\in {\rm Lip}_c(X, d)$, we have
\begin{equation}\label{equation4-1}
\begin{aligned}
	\int_{X_\alpha} \frac{|u|^p}{d^q} \,\mm_\alpha(\mathrm{d}x)=&\frac{1}{N-1}\int_{X_\alpha} \frac{|u|^p}{d^{q-1}} \tilde{\Delta}_\alpha d\,\mm_\alpha(\mathrm{d}x)\\
	=&-\frac{p}{N-1}\int_{X_\alpha} \frac{|u|^{p-1}}{d^{q-1}} \langle\nabla |u|,\nabla d\rangle\,\mm_\alpha(\mathrm{d}x)\\
	&+\frac{q-1}{N-1} \int_{X_\alpha} \frac{|u|^p}{d^q} |\nabla d|^2 \,\mm_\alpha(\mathrm{d}x).
\end{aligned}   
\end{equation} 
	Note $|\nabla d|=1 \,\,a.e.\,$on $X_\alpha$, a reorganization of the above estimate implies that 
\begin{equation}\label{equ1}
\begin{aligned}
	\frac{N-q}{p}\int_{X_\alpha} \frac{|u|^p}{d^q} \,\mm_\alpha(\mathrm{d}x)
	&\leq \int_{X_\alpha} \frac{|u|^{p-1}}{d^{q-1}} |\nabla u|\,\mm_\alpha(\mathrm{d}x)\\\text{Cauchy--Schwartz}~~
	&\leq \left(\int_{X_\alpha} |\nabla u\,|^2\,\mm_\alpha(\mathrm{d}x)\right)^{\frac{1}{2}}\left(\int_{X_\alpha} \frac{|u|^{2p-2}}{d^{2q-2}}\,\mm_\alpha(\mathrm{d}x)\right)^{\frac{1}{2}}.
\end{aligned}	
\end{equation}

	Similar to the proof of Theorem \ref{theorem1}, we can adjust the decomposition  and obtain 
\begin{equation}\label{equation4-3}
	\left(\int_X |\,\text{lip}\,u\,|^2\,\mathrm{d} \mm\right)\left(\int_X \frac{|u|^{2p-2}}{d^{2q-2}}\,\mathrm{d} \mm\right) \geq \frac{(N-q)^2}{p^2}\left(\int_X \frac{|u|^p}{d^q}\,\mathrm{d} \mm\right)^2
\end{equation}   
	which is the thesis.\medskip 
		
	{\it Step 2}. 
	Similarly, to prove the sharpness of $\frac{(N-q)^2}{p^2}$, we consider  functions 
\begin{equation*}
	u_{\lambda,k}(x):=\max \Big\{0,\min\{0,k-d(x)\}+1 \Big\}\left(\lambda+\max\big\{d(x),\frac{1}{k}\big\}^{2-q}\right)^{\frac{1}{2-p}}.
\end{equation*}
Define
\begin{equation*}
	u_\lambda(x):=\lim_{k\rightarrow \infty} u_{\lambda,k}(x)=(\lambda+d^{2-q})^{\frac{1}{2-p}}.
\end{equation*}	
	By an approximation argument we can prove that $u_\lambda$ verifies (\ref{equation4-3}) as well. Next we will show that $u_\lambda$ attains the equality in (\ref{equation4-3}).
	
	Consider a function $T:(0,\infty)\rightarrow \R$ defined by 
\begin{equation*}
	T(\lambda)=\frac{p-2}{p}\int_X \frac{(\lambda+d^{2-q})^{\frac{p}{2-p}}}{d^q} \,\mathrm{d} \mm.
\end{equation*}
	It is well-defined and differentiable, and we have 
\begin{equation*}
\begin{aligned}
	T'(\lambda)=&-\int_X \frac{(\lambda+d^{2-q})^{\frac{2p-2}{2-p}}}{d^q} \,\mathrm{d} \mm.
\end{aligned}
\end{equation*} 
Notice that
\[
|\text{lip}\,u_\lambda|^2\leq \left(\frac{2-q}{2-p}\right)^2(\lambda+d^{2-q})^{\frac{2p-2}{2-p}} d^{2-2q}.
\]	It is sufficient to prove 
\begin{equation}\label{equation4-4}
	\lambda T'(\lambda)=\left(\frac{N-q}{2-q}-\frac{p}{p-2}\right)T(\lambda), \qquad \forall \lambda>0.
\end{equation}
	Note that $\alpha:=\frac{N-q}{2-q}-\frac{p}{p-2}<0$. It is equivalent to prove
\begin{equation}\label{equation4-5}
	T(\lambda)=C \lambda^\alpha, \quad \forall \lambda>0 \,\,\text{and for some }C>0.
\end{equation}
By the layer cake representation and changing a variable, we have
\begin{align*}
	T(\lambda)&=\frac{p-2}{p}\int_X \frac{(\lambda+d^{2-q})^{\frac{p}{2-p}}}{d^q} \,\mathrm{d} \mm \\
	&=\frac{p-2}{p}\int_{0}^{\infty}\mm(\{x\in X: \frac{(\lambda+d^{2-q})^{\frac{p}{2-p}}}{d^q}>t\})  \,\mathrm{d} t \\
	&=\frac{p-2}{p} k \omega_N \int_{0}^{\infty} \frac{(\lambda+\rho^{2-q})^{\frac{2p-2}{2-p}}}{\rho^{2q-1-N}}\left(\frac{(2-q)p}{p-2}+q(\lambda\rho^{q-2}+1)\right) \, \mathrm{d} \rho\\\text{set}~\rho=\lambda^\frac{1}{2-q}y~~~
	&=\frac{p-2}{p} k \omega_N  \int_{0}^{\infty} \frac{(\lambda+\lambda y^{2-q})^{\frac{2p-2}{2-p}}}{\lambda^{\frac{2q-1-N}{2-q} y^{2q-1-N}}}\left(\frac{(2-q)p}{p-2}+q(y^{q-2}+1)\right)\lambda^\frac{1}{2-q} \, \mathrm{d} y\\
	&=\lambda^\alpha \cdot\frac{p-2}{p} k \omega_N  \int_{0}^{\infty} \frac{(1+ y^{2-q})^{\frac{2p-2}{2-p}}}{ y^{2q-1-N}}\left(\frac{(2-q)p}{p-2}+q(y^{q-2}+1)\right) \, \mathrm{d} y
\end{align*}
	which is (\ref{equation4-5}) and we complete the proof of $(b)\Rightarrow (a)$.
	
	\medskip
	
	${\bf (a)\Rightarrow (b)}$.
	
	{\it Step 3}. Recall that
\begin{equation}\label{equation4-6}
	\lim_{\rho \rightarrow 0^+} \frac{\mm(B_{\rho}(x_0))}{\omega_N \rho^N}=A,\quad A>0\,\,\text{is a finite constant}.
\end{equation} 
	and
\begin{equation}\label{equation4-7}
	\mm(B_{\rho}(x_0))\leq A \omega_N \rho^N,\qquad \forall \rho >0.
\end{equation}
	Similarly, we consider the function
\begin{equation}
\begin{aligned}\label{equation4-8}
	T(\lambda)&=\frac{p-2}{p} k \omega_N \int_{0}^{\infty} \frac{(\lambda+\rho^{2-q})^{\frac{2p-2}{2-p}}}{\rho^{2q-1-N}}\left(\frac{(2-q)p}{p-2}+q(\lambda\rho^{q-2}+1)\right) \, \mathrm{d} \rho\\
	&=\lambda^\alpha \cdot\frac{p-2}{p} k \omega_N  \int_{0}^{\infty} \frac{(1+ y^{2-q})^{\frac{2p-2}{2-p}}}{ y^{2q-1-N}}\left(\frac{(2-q)p}{p-2}+q(y^{q-2}+1)\right) \, \mathrm{d} y,
\end{aligned}
\end{equation}
		and the class of functions
\begin{equation}\label{equation4-9}
	\widetilde{u}_\lambda(x)=(\lambda+d^{2-q}_{x_0})^{\frac{1}{2-p}},\quad \lambda>0.
\end{equation}
	Similarly, the functions $\widetilde{u}_\lambda$ can be approximated by elements in Lip$_c(X,d)$ for every $\lambda>0$. By inserting $\widetilde{u}_\lambda$ into \ref{second}, we obtain that 
\begin{equation}\label{equation4-10}
	\frac{2-q}{p-2} \int_X \frac{(\lambda+d^{2-q}_{x_0})^{\frac{2p-2}{2-p}}}{d^{2q-2}_{x_0}} \,\mathrm{d} \mm\geq\frac{N-q}{p}\int_X \frac{(\lambda+d^{2-q}_{x_0})^{\frac{p}{2-p}}}{d^q_{x_0}} \,\mathrm{d} \mm.
\end{equation}
	Define a function $P: (0,\infty)\rightarrow \R$  by
\begin{equation*}
	P(\lambda)=\frac{p-2}{p}\int_X \frac{(\lambda+d^{2-q}_{x_0})^{\frac{p}{2-p}}}{d^q_{x_0}} \,\mathrm{d} \mm,\quad \lambda>0.
\end{equation*}
	It is well-defined and differentiable. Through similar arguments, (\ref{equation4-10}) is equivalent to
\begin{equation}\label{equation4-11}
	\lambda P'(\lambda)\geq\left(\frac{N-q}{2-q}-\frac{p}{p-2}\right)P(\lambda), \qquad \forall \lambda>0.
\end{equation}
	Now we claim that 
\begin{equation*}
	P(\lambda)\geq \frac{A}{k}T(\lambda), \qquad \forall \lambda>0.
\end{equation*}
	Due to (\ref{equation4-6}), for any $\varepsilon>0 $ small enough, we can find $\rho_\varepsilon>0$ such that 
\begin{equation*}
	\mm(B_{\rho}(x_0))\geq (A-\varepsilon) \omega_N \rho^N,\quad \forall  \rho \in [0,\rho_\varepsilon].
\end{equation*}
		Therefore, by layer cake representation and changing the variable $\rho=\lambda^\frac{1}{2-q}y$, it turns out that
\begin{align*}
	P(\lambda)&=\frac{p-2}{p}\int_{0}^{\infty}\mm(\{x\in X: \frac{(\lambda+d^{2-q}_{x_0})^{\frac{p}{2-p}}}{d^q_{x_0}}>t\})  \,\mathrm{d} t \\
	&=\frac{p-2}{p} \int_{0}^{\infty} \mm(B_{\rho}(x_0)) \frac{(\lambda+\rho^{2-q})^{\frac{2p-2}{2-p}}}{\rho^{2q-1}}\left(\frac{(2-q)p}{p-2}+q(\lambda\rho^{q-2}+1)\right) \, \mathrm{d} \rho\\
	&\geq \frac{p-2}{p}(A-\varepsilon) \omega_N \int_{0}^{\rho_\varepsilon}  \frac{(\lambda+\rho^{2-q})^{\frac{2p-2}{2-p}}}{\rho^{2q-1-N}}\left(\frac{(2-q)p}{p-2}+q(\lambda\rho^{q-2}+1)\right) \, \mathrm{d} \rho\\
	&=\lambda^\alpha \cdot\frac{p-2}{p} (A-\varepsilon) \omega_N  \int_{0}^{\lambda^\frac{1}{q-2}\rho_\varepsilon} \frac{(1+ y^{2-q})^{\frac{2p-2}{2-p}}}{ y^{2q-1-N}}\left(\frac{(2-q)p}{p-2}+q(y^{q-2}+1)\right) \, \mathrm{d} y.
\end{align*}
	Combining (\ref{equation4-8}) and the fact that $\frac{1}{q-2}<0$ , we have   
\begin{equation*}
	\liminf_{\lambda\rightarrow 0^+ }\frac{P(\lambda)}{T(\lambda)}\geq\frac{A-\varepsilon}{k}.
\end{equation*} 
	Since $\varepsilon>0$ is arbitrary, we  obtain
\begin{equation}\label{equation4-12}
	\liminf_{\lambda\rightarrow 0^+ }\frac{P(\lambda)}{T(\lambda)}\geq\frac{A}{k}.
\end{equation} 
	Combining (\ref{equation4-4}) and (\ref{equation4-11}), we have
\begin{equation}\label{equation4-13}
	\frac{P'(\lambda)}{P(\lambda)} \geq \frac{T'(\lambda)}{T(\lambda)},\quad \forall\lambda>0.
\end{equation}
    Integrating this inequality, it yields that the function  $\lambda\mapsto \frac{P(\lambda)}{T(\lambda)} $ is non-decreasing.  Combining with (\ref{equation4-12}) we get
\begin{equation*}
    \frac{P(\lambda)}{T(\lambda)}\geq \frac{A}{k},\quad \forall \lambda>0,
\end{equation*}
This means, for any $\lambda>0$,
\begin{align*}
   	\frac{p-2}{p} \int_{0}^{\infty} \left(\mm(B_{\rho}(x_0))-\frac{A}{k}\cdot k\omega_N \rho^N\right) \frac{(\lambda+\rho^{2-q})^{\frac{2p-2}{2-p}}}{\rho^{2q-1}}\left(\frac{(2-q)p}{p-2}+q(\lambda\rho^{q-2}+1)\right) \, \mathrm{d}\rho\geq0.
\end{align*}
     Combining with  (\ref{equation4-7}), we have
\begin{equation*}
	\mm(B_{\rho}(x_0))=A \omega_N \rho^N, \qquad \forall \rho>0,
\end{equation*} 
    so $(X,d,\mm)$ is a volume cone and  we complete the proof of Theorem \ref{theorem2}.
\end{proof}
\begin{remark}
	By the same methods, we can prove the rigidity of  more general CKN inequalities in \cite{MR2350131}.  To achieve this, we just need to replace Cauchy--Schwartz with H\"{o}lder inequality in (\ref{equ1}).
\end{remark}


\end{document}